\theoremstyle{plain}
\newtheorem{thm}{Theorem}
\newtheorem{prop}[thm]{Proposition}
\newtheorem{remark}[thm]{Remark}
\theoremstyle{definition}
\newtheorem{definition}[thm]{Definition}
\numberwithin{thm}{section}
\newcommand{\adj}{\leftrightarrow}
\newcommand{\adjeq}{\leftrightarroweq}
\def\Z{{\mathbb Z}}
\def\N{{\mathbb N}}
\title{Variants on Digital Covering Maps}
\author{Laurence Boxer
\thanks{
    Department of Computer and Information Sciences,
    Niagara University,
    Niagara University, NY 14109, USA;
    and Department of Computer Science and Engineering,
    State University of New York at Buffalo.
    email: boxer@niagara.edu
}
}
\date{ }
\begin{document}
\maketitle{}

\begin{abstract}
S-E Han's paper~\cite{HanEquiv} discusses several
variants of digital covering maps. We show 
several equivalences among these variants
and discuss shortcomings in Han's paper.

Key words and phrases: digital topology, digital image, covering map

MSC: 54B20, 54C35
\end{abstract}

\maketitle

\section{Introduction}
The notion of a covering map has been adapted from
classical algebraic topology to digital topology,
where it is an important tool for computing
digital versions of fundamental groups for binary
digital images. With varying success, attempts have
been made to modify the notion of a digital covering
map to obtain related results under less restrictive
conditions. Among these attempts are Han's
paper~\cite{HanEquiv}, which contains 
a proof that is murky (see section~\ref{unmurk}
for clarification) and citations that are
inappropriate. We also discuss a strangely 
presented example in Han's related 
paper~\cite{HanUnique}
(see Remark~\ref{badExl}). Further, 
it turns out that
some of Han's variants on covering maps don't
really vary from covering maps
(see Theorem~\ref{coverEquivs}).
Also, some of material of~\cite{HanEquiv}
is superseded by other 
papers including~\cite{BxHtpProps,Pak22,PakZak}. 
We justify these claims in the current paper.

\section{Preliminaries}
We use $\N$ for the set of natural numbers,
$\Z$ for the set of integers, and
$\#X$ for the number of distinct members of $X$.

We typically denote a (binary) digital image
as $(X,\kappa)$, where $X \subset \Z^n$ for some
$n \in \N$ and $\kappa$ represents an adjacency
relation of pairs of points in $X$. Thus,
$(X,\kappa)$ is a graph, in which members of $X$ may be
thought of as black points, and members of $\Z^n \setminus X$
as white points, of a picture of some ``real world" 
object or scene.

\subsection{Adjacencies}
Let $u,n \in \N$, $1 \le u \le n$. 
Han's papers use ``$k$-adjacency" sometimes to mean
an arbitrary adjacency, sometimes as an abbreviation
for what he calls ``$k(u,n)$-adjacency," where the
digital image $(X,k)$ satisfies $X \subset \Z^n$ and
$x = (x_1, \ldots, x_n),~y=(y_1, \ldots, y_n) \in X$ 
are $k(u,n)$-adjacent if and only if
\begin{itemize}
    \item $x \neq y$, and
    \item for at most $u$ indices~$i$, 
          $\mid x_i - y_i \mid = 1$, and
    \item for all indices $j$ such that 
          $\mid x_j - y_j \mid \neq 1$, we have
          $x_j = y_j$.
\end{itemize}
Other authors refer to this adjacency as $c_u$-adjacency.
We will prefer the latter notation in the current paper.
The $c_u$ adjacencies are the adjacencies most used
in digital topology, especially $c_1$ and $c_n$.

In low dimensions, it is also common to denote a
$c_u$ adjacency by the number of points that can
have this adjacency with a given point in $\Z^n$. E.g.,
\begin{itemize}
    \item For subsets of $\Z^1$, $c_1$-adjacency is 2-adjacency.
    \item For subsets of $\Z^2$, $c_1$-adjacency is 4-adjacency and
          $c_2$-adjacency is 8-adjacency.
    \item For subsets of $\Z^3$, $c_1$-adjacency is 6-adjacency,
          $c_2$-adjacency is 18-adjacency, and
          $c_3$-adjacency is 26-adjacency.
\end{itemize}

We use the notations $y \adj_{\kappa} x$, or, when
the adjacency $\kappa$ can be assumed, $y \adj x$, to mean
$x$ and $y$ are $\kappa$-adjacent.
The notations $y \adjeq_{\kappa} x$, or, when
$\kappa$ can be assumed, $y \adjeq x$, mean either
$y=x$ or $y \adj_{\kappa} x$.
For $x \in X$, let
\[ N(X,x,\kappa) =
  \{ \, y \in X \mid x \adj_{\kappa} y \, \}.
\]
When the image $(X,\kappa)$ under discussion is clear, we
will use the notations $N(x)$ or $N_{\kappa}(x)$
as follows.
\[
N(x) = \{ \, y \in X \mid y \adjeq_{\kappa} x \, \} =
N(X,x,\kappa) \cup \{x\}.
\]

A sequence $P=\{y_i\}_{i=0}^m$ in a digital image $(X,\kappa)$ is
a {\em $\kappa$-path from $a \in X$ to $b \in X$} if
$a=y_0$, $b=y_m$, and $y_i \adjeq_{\kappa} y_{i+1}$ 
for $0 \leq i < m$.

$X$ is {\em $\kappa$-connected}~\cite{Rosenfeld},
or {\em connected} when $\kappa$
is understood, if for every pair of points $a,b \in X$ there
exists a $\kappa$-path in $X$ from $a$ to $b$.

A {\em (digital) $\kappa$-closed curve} is a
path $S=\{s_i\}_{i=0}^m$ such that $s_0=s_m$,
and $0 < |i - j| < m$ 
implies $s_i \neq s_j$. If, also, $0 \le i < m$ implies
\[ N(S,x_i,\kappa)=\{x_{(i-1)\mod n},~x_{(i+1)\mod m}\}
\]
then $S$ is a {\em (digital) 
$\kappa$-simple closed curve}.

\subsection{Digitally continuous functions}
Digital continuity is defined
to preserve connectedness, as at
Definition~\ref{continuous} below. By
using adjacency as our standard of ``closeness," we
get Theorem~\ref{continuityPreserveAdj} below.

\begin{definition}
\label{continuous}
{\rm ~\cite{Boxer99} (generalizing a definition of~\cite{Rosenfeld})}
Let $(X,\kappa)$ and $(Y,\lambda)$ be digital images.
A function $f: X \rightarrow Y$ is 
{\em $(\kappa,\lambda)$-continuous} if for
every $\kappa$-connected $A \subset X$ we have that
$f(A)$ is a $\lambda$-connected subset of $Y$.
\end{definition}

If either of $X$ or $Y$ is a subset of the 
other, we use the abbreviation
$\kappa$-continuous for $(\kappa,\kappa)$-continuous.

When the adjacency relations are understood, we will simply say that $f$ is \emph{continuous}. Continuity can be expressed in terms of adjacency of points:

\begin{thm}
{\rm ~\cite{Rosenfeld,Boxer99}}
\label{continuityPreserveAdj}
A function $f:X\to Y$ is continuous if and only if $x \adj x'$ in $X$ 
implies $f(x) \adjeq f(x')$.
\end{thm}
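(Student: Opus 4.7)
The plan is to prove both implications directly from the definition of continuity, using the fact that a two-point set $\{x,x'\}$ with $x \adj x'$ is the simplest nontrivial connected set, and that $\kappa$-connectedness can be witnessed by $\kappa$-paths.

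For the forward direction, I would assume $f$ is $(\kappa,\lambda)$-continuous and let $x \adj_{\kappa} x'$. Then $A = \{x,x'\}$ is $\kappa$-connected (the one-step sequence $x, x'$ is a $\kappa$-path). By Definition~\ref{continuous}, $f(A) = \{f(x), f(x')\}$ is $\lambda$-connected. A two-point subset of $Y$ is $\lambda$-connected precisely when its two elements coincide or are $\lambda$-adjacent, which gives $f(x) \adjeq_{\lambda} f(x')$.

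For the reverse direction, I would assume the adjacency-preservation condition and let $A \subset X$ be $\kappa$-connected. Given $y, y' \in f(A)$, pick preimages $a, a' \in A$ with $f(a)=y$, $f(a')=y'$, and a $\kappa$-path $a = a_0, a_1, \ldots, a_m = a'$ in $A$. For each $i$, $a_i \adjeq_{\kappa} a_{i+1}$ means either $a_i = a_{i+1}$, in which case $f(a_i) = f(a_{i+1})$, or $a_i \adj_{\kappa} a_{i+1}$, in which case the hypothesis gives $f(a_i) \adjeq_{\lambda} f(a_{i+1})$. Either way, the images form a $\lambda$-path in $f(A)$ from $y$ to $y'$, so $f(A)$ is $\lambda$-connected.

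There is no real obstacle here; the only mild subtlety is the careful bookkeeping with $\adjeq$ versus $\adj$ (allowing the possibility that $f$ collapses adjacent points), which is why the conclusion is stated with $\adjeq$ rather than $\adj$. I would make sure to flag explicitly that a one-step path between distinct adjacent points witnesses connectivity of a two-element set, and that the composition of step-by-step adjacency-or-equality relations along a path still yields a $\lambda$-path in the image.
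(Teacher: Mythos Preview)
Your argument is correct and is the standard proof of this elementary equivalence. Note, however, that the paper does not supply its own proof of this theorem: it is stated with a citation to \cite{Rosenfeld,Boxer99} and no proof environment follows. So there is nothing in the paper to compare your approach against; your write-up simply fills in what the paper leaves to the cited references.
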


Han's papers generally use the equivalent formulation that
$f$ is continuous if and only for every $x \in X$,
$f(N_{\kappa}(x)) \subset N_{\lambda}(f(x))$.

See also~\cite{Chen94,Chen04}, where similar notions are referred to as {\em immersions}, {\em gradually varied operators},
and {\em gradually varied mappings}.

A digital {\em isomorphism} (called {\em homeomorphism}
in~\cite{Boxer94}) is a $(\kappa,\lambda)$-continuous
surjection $f: X \to Y$ such that $f^{-1}: Y \to X$ is
$(\lambda,\kappa)$-continuous.

The literature uses {\em path} polymorphically: a
$(c_1,\kappa)$-continuous function $f: [0,m]_{\Z} \to X$
is a $\kappa$-path if $f([0,m]_{\Z})$ is a $\kappa$-path
as described above from $f(0)$ to $f(m)$.

\section{Han's variants on local isomorphisms}
The definition~\cite{Han05} of a digital covering map was simplified
to the following.

\begin{definition}
   {\rm \cite{BxAndWedges}}
   \label{coveringDef}
   Let $p: (E,\kappa) \to (B,\lambda)$ be a continuous
   surjection of digital images. The map $p$ is a
   {\em $(\kappa,\lambda)$ covering map} if and only if
   \begin{itemize}
       \item for every $b \in B$, there is an index set~$M$
             such that 
             \[p^{-1}(N_{\lambda}(b)) = 
             \bigcup_{i \in M} N_{\kappa}(e_i), 
             \mbox{ where } e_i \in p^{-1}(b);
             \]
       \item if $i,j \in M$, $i \neq j$, then
            $N_{\kappa}(e_i) \cap N_{\kappa}(e_j) =
            \emptyset$; and
       \item $p|_{N_{\kappa}(e_i)}: N_{\kappa}(e_i) \to
              N_{\lambda}(b)$ is a 
              $(\kappa,\lambda)$-isomorphism.
   \end{itemize}
\end{definition}

We find the following definition in Han's
paper~\cite{Han20} (not in~\cite{Han04} despite the claims
to the contrary in~\cite{Han20,HanEquiv}).

\begin{definition}
\label{PL-iso}
A digitally continuous map $h: (X,\kappa) \to (Y,\lambda)$
is a {\em pseudo-local (PL) isomorphism} if for every
$x \in X$, $h(N_{\kappa}(x)) \subset Y$ is
$\lambda$-isomorphic to 
$N_{\lambda}(h(x)) \subset Y$.
\end{definition}

In his paper~\cite{Han04}, Han gives the
following.

\begin{definition}
\label{localIso}
A digitally continuous map $h: (X,\kappa) \to (Y,\lambda)$
is a {\em local homeomorphism} [in more recent
terminology, a {\em local isomorphism}]
if for all $x \in X$, $h|_{N_{\kappa}(x)}$ is a 
$(\kappa,\lambda)$-homeomorphism
[$(\kappa,\lambda)$-isomorphism] onto
$N_{\lambda}(h(x))$.
\end{definition}

We have the following.

\begin{prop}
\label{localPLequiv}
Let $h: (X,\kappa) \to (Y,\lambda)$ be a 
digitally continuous map.
If $h$ is a local isomorphism 
then $h$ is a PL isomorphism.
\end{prop}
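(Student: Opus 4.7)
The plan is to unpack the two definitions and observe that the conclusion is essentially immediate once the surjectivity built into \emph{local isomorphism} is used.

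First, I would fix an arbitrary $x \in X$ and invoke the hypothesis that $h$ is a local isomorphism at $x$. By Definition~\ref{localIso}, the restriction $h|_{N_{\kappa}(x)} : N_{\kappa}(x) \to N_{\lambda}(h(x))$ is a $(\kappa,\lambda)$-isomorphism; in particular, it is a surjection \emph{onto} $N_{\lambda}(h(x))$. Therefore the image set satisfies the equality
\[
h(N_{\kappa}(x)) = N_{\lambda}(h(x)),
\]
rather than merely containing or being contained in $N_{\lambda}(h(x))$.

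Next, I would verify the PL-isomorphism condition (Definition~\ref{PL-iso}) by exhibiting a $\lambda$-isomorphism between $h(N_{\kappa}(x))$ and $N_{\lambda}(h(x))$. Since these two sets are equal as subsets of $Y$, the identity map $\id_{N_{\lambda}(h(x))}$ serves as a trivial $\lambda$-isomorphism between them. Since $x$ was arbitrary, this establishes that $h$ is a PL isomorphism.

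There is no real obstacle here; the subtle point worth flagging in the write-up is the distinction between the two definitions: Definition~\ref{localIso} demands that the restriction itself be an isomorphism onto the target neighborhood, whereas Definition~\ref{PL-iso} only demands that some $\lambda$-isomorphism exist between the image $h(N_{\kappa}(x))$ and $N_{\lambda}(h(x))$. The former is strictly stronger because it simultaneously forces the image to coincide exactly with $N_{\lambda}(h(x))$ and provides an explicit isomorphism, so the implication in the stated direction is straightforward.
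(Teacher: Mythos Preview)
Your argument is correct and is precisely the kind of elementary verification the paper has in mind; indeed, the paper's own ``proof'' consists of the single line ``Elementary and left to the reader,'' so your write-up simply supplies those details. The key observation you make---that $h|_{N_{\kappa}(x)}$ being an isomorphism onto $N_{\lambda}(h(x))$ forces $h(N_{\kappa}(x)) = N_{\lambda}(h(x))$, whence the identity serves as the required $\lambda$-isomorphism---is exactly the intended content.
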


\begin{proof}
    Elementary and left to the
    reader.
\end{proof}

\begin{thm}
    {\rm (\cite{PakZak}, correcting an error of~\cite{Han04})}
    \label{PakZakEquiv}
    Let $f: (X,\kappa) \to (Y,\lambda)$ be a continuous
    surjection. Then $f$ is a digital covering map
    if and only if $f$ is a local isomorphism.
\end{thm}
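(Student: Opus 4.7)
The plan is to prove both implications by the same basic trick: exploit the bijectivity of the map on $N_\kappa$-neighborhoods together with the symmetry of the adjacency relation (so that $y\in N_\kappa(x)$ iff $x\in N_\kappa(y)$).

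For the forward direction, assume $f$ is a digital covering map, and let $x\in X$ be arbitrary. Set $b=f(x)$ and apply Definition~\ref{coveringDef} at $b$ to obtain the family $\{e_i\}_{i\in M}\subset p^{-1}(b)$. Since $x\in p^{-1}(b)\subset p^{-1}(N_\lambda(b))$, there is some $i$ with $x\in N_\kappa(e_i)$. The key step is to show $x=e_i$: both $x$ and $e_i$ lie in $N_\kappa(e_i)$, both map to $b$, and $p|_{N_\kappa(e_i)}$ is an isomorphism (hence injective), forcing $x=e_i$. Then $f|_{N_\kappa(x)}=f|_{N_\kappa(e_i)}$ is by hypothesis an isomorphism onto $N_\lambda(b)=N_\lambda(f(x))$, so $f$ is a local isomorphism.

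For the reverse direction, assume $f$ is a local isomorphism, fix $b\in Y$, and take $M=p^{-1}(b)$ as the index set, with each $e_i$ equal to the indexing element itself. Condition (3) of Definition~\ref{coveringDef} is immediate from the definition of local isomorphism applied at each $e_i$. For condition (1), one inclusion uses continuity: if $x\in N_\kappa(e_i)$ then $f(x)\adjeq_\lambda f(e_i)=b$, so $f(x)\in N_\lambda(b)$. For the reverse inclusion, given $x$ with $f(x)\in N_\lambda(b)$, invoke the local isomorphism at $x$: since $f|_{N_\kappa(x)}$ is a bijection onto $N_\lambda(f(x))$ and $b\in N_\lambda(f(x))$, there is a unique $e\in N_\kappa(x)$ with $f(e)=b$; symmetry of $\adjeq_\kappa$ then gives $x\in N_\kappa(e)$, placing $x$ in the required union.

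For condition (2), suppose $x\in N_\kappa(e_i)\cap N_\kappa(e_j)$. Using symmetry once more, both $e_i$ and $e_j$ lie in $N_\kappa(x)$. But $f|_{N_\kappa(x)}$ is injective and $f(e_i)=f(e_j)=b$, forcing $e_i=e_j$. This establishes disjointness and completes the proof.

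The argument is largely bookkeeping, and I expect no serious obstacle; the one spot requiring care is the choice of which neighborhood's isomorphism to invoke in each step. The natural temptation is to work inside $N_\kappa(e_i)$ throughout, but both the forward direction's identification $x=e_i$ and the reverse direction's verification of conditions (1) and (2) are cleanest when one instead applies the local isomorphism centered at the \emph{arbitrary} point $x$ and uses symmetry of adjacency to transfer conclusions back to the $e_i$'s.
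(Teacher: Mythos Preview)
The paper does not supply a proof of this theorem; it simply cites the result from \cite{PakZak}. So there is no in-paper argument to compare against.

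That said, your proof is correct and self-contained. The forward direction hinges on the observation that any $x\in p^{-1}(b)$ must actually coincide with one of the listed $e_i$'s, which you extract cleanly from the injectivity of $p|_{N_\kappa(e_i)}$. The reverse direction is handled by the device you flag at the end: apply the local-isomorphism hypothesis at the arbitrary point $x$ rather than at the $e_i$'s, and use symmetry of $\adjeq$ to move back. Both the covering condition $p^{-1}(N_\lambda(b))=\bigcup N_\kappa(e_i)$ and the disjointness condition fall out immediately from this. One cosmetic remark: you switch between $f$ and $p$ for the map; pick one and stick with it.
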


We will also discuss the following notion.

\begin{definition}
    {\rm \cite{HanUnique}}
    \label{WL-iso}
    A function $h: (X,\kappa) \to (Y,\lambda)$
    is a {\em weakly local (WL) isomorphism} if for all
    $x \in X$, $h|_{N(x,1)}$ is an isomorphism onto
    $h(N(x,1))$.
\end{definition}

\section{Theorem 3.15(3) of~\cite{HanEquiv}}
\label{unmurk}
Part (3) of Theorem 3.15 of~\cite{HanEquiv}
states that
\begin{quote}
    Neither of a PL-$(k_0, k_1)$-isomorphism 
    and a WL-$(k_0, k_1)$-isomorphism 
    implies the other.
\end{quote}
The assertion is correct, but Han's argument
for the existence of $(X,k_0)$, $(Y,k_1)$,
and a WL-$(k_0, k_1)$-isomorphism
$h: X \to Y$ that is not a 
PL-$(k_0, k_1)$-isomorphism, is not as clear
as it could be. In the following, we 
clarify Han's argument.

In his example, Han makes use of 
an unstated assumption, namely
that $(Y,k_1)$ is connected.
He also assumes $k_0 = k_1 = \kappa$, that
\begin{equation}
\label{properSubset}
    X \subset Y \mbox{ but } X \neq Y
\end{equation}
and that 
$h$ is the inclusion map, trivially
a WL-$(\kappa, \kappa)$-isomorphism.

Note that since $(Y,\kappa)$ is connected,
~(\ref{properSubset}) implies there is a
$\kappa$-path $\{y,y'\} \subset Y$
such that $y \in X$, $y' \in Y \setminus X$.
Therefore,
\[ h(N(X,y,\kappa)) = N(X,y,\kappa) 
\mbox{ is a proper subset of }
  N(X,y,\kappa) \cup \{y'\} \subset N(Y,h(y),\kappa).
\]
Hence $h$ is not a 
PL-$(\kappa, \kappa)$-isomorphism.

\section{Theorem 3.20 of~\cite{HanEquiv}}
Let $(X,\kappa)$ and $(Y,\lambda)$ be digital
simple closed curves of $\ell_1$ and $\ell_2$
points, respectively.
Theorem~3.20 of~\cite{HanEquiv} states that 
$(X,\kappa)$ embeds into $(Y,\lambda)$ 
if and only if $\ell_1=\ell_2$.
Since a connected nonempty subset of
$(Y,\lambda)$ is either $(Y,\lambda)$ itself
or is isomorphic to a digital interval - which
is not even of the same digital homotopy
type as $(X,\kappa)$ - Han's assertion is an
easy consequence of the much older Theorem~5.1
of~\cite{BxHtpProps}, which 
states that $(X,\kappa)$ and $(Y,\lambda)$
have the same digital 
homotopy type if and only if 
$\ell_1=\ell_2$.

\section{Han's pseudo-covering maps in~~\cite{HanEquiv}}
Han defines a digital pseudo-covering
as follows.

\begin{definition}
\label{HanPseudocover}
{\rm \cite{HanUnique}}
    Let $p: (E,\kappa) \to (B,\lambda)$ be a surjection
    such that for every $b \in B$,
    \begin{enumerate}
        \item there is an index set $M$ such that
              $p^{-1} (N_{\kappa}(b,1)) = 
              \bigcup_{i \in M} N_{\kappa}(e_i,1)$, where
              $e_i \in p^{-1}(b)$;
        \item if $i,j \in M$ and $i \neq j$, then
              $N_{\kappa}(e_i,1) \cap N_{\kappa}(e_j,1) =
              \emptyset$; and
        \item $p|_{N_{\kappa}(e_i,1)}: N_{\kappa}(e_i,1) \to
               N_{\lambda}(b,1)$ is a WL-isomorphism for 
               all $i \in M$.
    \end{enumerate}
    Then $p$ is a {\em pseudo-covering} map.
\end{definition}

However, A. Pakdaman shows
in~\cite{Pak22} that Han's definition
does not effectively give us a new object of study.
In particular, Pakdaman shows the following.

\begin{thm}
\label{pseudoCoverIsCover}
    A digital pseudo-covering map as defined in
    Definition~\ref{HanPseudocover} is in fact a digital covering map.
\end{thm}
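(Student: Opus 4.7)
The plan is to observe that a pseudo-covering differs from a covering map (Definition~\ref{coveringDef}) only in that condition~(3) asks $p|_{N_\kappa(e_i,1)}$ to be a WL-isomorphism rather than a genuine $(\kappa,\lambda)$-isomorphism \emph{onto} $N_\lambda(b,1)$, and to show that the three axioms of Definition~\ref{HanPseudocover} already force this upgrade. Continuity of $p$, which Definition~\ref{HanPseudocover} curiously omits, will also fall out for free.

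My first step would be to promote the WL-isomorphism on each sheet to a $(\kappa,\lambda)$-isomorphism onto its image. Applying Definition~\ref{WL-iso} to $p|_{N_\kappa(e_i,1)}$ at the specific point $x=e_i$, and noting that every point of $N_\kappa(e_i,1)$ is $\adjeq_\kappa e_i$, the closed $1$-neighborhood of $e_i$ computed inside $(N_\kappa(e_i,1),\kappa)$ is the full set $N_\kappa(e_i,1)$. Hence $p|_{N_\kappa(e_i,1)}$ is a $(\kappa,\lambda)$-isomorphism onto $p(N_\kappa(e_i,1))\subseteq N_\lambda(b,1)$. Global continuity of $p$ then follows by a short argument: given $e\adj_\kappa e'$ with $b=p(e)$, condition~(1) at $b$ places $e$ in some $N_\kappa(e_j,1)$, and the injectivity just established forces $e=e_j$, so $e'\in N_\kappa(e_j,1)\subseteq p^{-1}(N_\lambda(b,1))$ and hence $p(e')\adjeq_\lambda b$.

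The substance of the proof is showing $p(N_\kappa(e_i,1))=N_\lambda(b,1)$. The inclusion $\subseteq$ is immediate from condition~(1). For the reverse, I would take any $b'\in N_\lambda(b,1)\setminus\{b\}$ and invoke Definition~\ref{HanPseudocover} at $b'$ instead of $b$: this yields a disjoint decomposition $p^{-1}(N_\lambda(b',1))=\bigcup_k N_\kappa(\tilde e_k,1)$ with $\tilde e_k\in p^{-1}(b')$. Since $b\in N_\lambda(b',1)$, the point $e_i\in p^{-1}(b)$ must lie in some $N_\kappa(\tilde e_k,1)$; because $p(e_i)=b\neq b'=p(\tilde e_k)$, this forces $e_i\adj_\kappa\tilde e_k$, i.e.\ $\tilde e_k\in N_\kappa(e_i,1)$. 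Therefore $b'=p(\tilde e_k)\in p(N_\kappa(e_i,1))$, as desired. Combined with the previous step, this verifies bullet~(3) of Definition~\ref{coveringDef}; bullets~(1) and~(2) there are already identical to conditions~(1)--(2) of Definition~\ref{HanPseudocover}.

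The hard part is exactly this reciprocal step: nothing in the axioms applied at $b$ alone prevents a sheet $N_\kappa(e_i,1)$ from being mapped to a proper subset of $N_\lambda(b,1)$, and one really has to re-apply Definition~\ref{HanPseudocover} at \emph{each} neighbor $b'$ of $b$ to pull an appropriate preimage of $b'$ back into $N_\kappa(e_i,1)$. Once this is noticed, the rest of the argument is bookkeeping.
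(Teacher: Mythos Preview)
Your argument is correct. The key observation---that applying the WL-isomorphism condition of $p|_{N_\kappa(e_i,1)}$ at the center $e_i$ already yields a genuine $(\kappa,\lambda)$-isomorphism onto the image, and that applying Definition~\ref{HanPseudocover} a second time at each neighbor $b'$ of $b$ forces that image to be all of $N_\lambda(b,1)$---is exactly what is needed, and your derivation of continuity from injectivity on sheets is clean.

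However, there is nothing to compare against: the present paper does not give its own proof of Theorem~\ref{pseudoCoverIsCover} but simply quotes it as a result of Pakdaman~\cite{Pak22}. Your write-up thus supplies a self-contained proof where the paper only cites one. For what it is worth, the reciprocal-neighborhood trick you use (pulling a preimage of $b'$ into $N_\kappa(e_i,1)$ by invoking the axioms at $b'$) is the natural and essentially only way to get surjectivity onto $N_\lambda(b,1)$, so your argument is presumably close in spirit to Pakdaman's.
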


\begin{definition}
    {\rm \cite{Han05}} Let $p: (E,\kappa) \to (B,\lambda)$
    be $(\kappa,\lambda)$-continuous. Let 
    $f: [0,m]_{\Z} \to B$ be $(c_1, \lambda)$-continuous.
    A $(c_1,\kappa)$-continuous function 
    $\tilde{f}: [0,m]_{\Z} \to E$ such that
    $p \circ \tilde{f} = f$ is a 
    {\em (digital) path lifting of $f$}. If for every
    $b_0 \in B$, every $e_0 \in p^{-1}(b_0)$, and every
    path $f$ such that $f(0)=b_0$,
    \[ \mbox{there is a unique
    lifting $\tilde{f}$ such that $\tilde{f}(0)=e_0$,}
    \]
    then $p$ has the {\em unique path lifting property}.
\end{definition}

\begin{thm}
    \label{liftingThm}
    {\rm \cite{Han05}}
    A digital covering map has the unique path
    lifting property.
\end{thm}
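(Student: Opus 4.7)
The plan is to construct the lift inductively along the path and to show at each step that both existence and uniqueness are forced by the local isomorphism structure provided by Definition~\ref{coveringDef}. Write $f: [0,m]_{\Z} \to B$ with $f(0)=b_0$, and pick $e_0 \in p^{-1}(b_0)$. I would define $\tilde{f}(0)=e_0$ and then, supposing $\tilde{f}(0),\ldots,\tilde{f}(k)$ have already been defined for some $k<m$, extend the lift to $\tilde{f}(k+1)$.

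For the inductive step, set $b=f(k)$ and $e=\tilde{f}(k)$. Apply the covering condition at $b$ to obtain the decomposition $p^{-1}(N_{\lambda}(b)) = \bigcup_{i\in M} N_{\kappa}(e_i)$ with the union disjoint and each $p|_{N_{\kappa}(e_i)}$ a $(\kappa,\lambda)$-isomorphism onto $N_{\lambda}(b)$. Since $p(e)=b\in N_{\lambda}(b)$, the point $e$ lies in some $N_{\kappa}(e_{i_0})$; because $p(e)=p(e_{i_0})=b$ and $p|_{N_{\kappa}(e_{i_0})}$ is a bijection, we in fact must have $e=e_{i_0}$. Now continuity of $f$ gives $f(k+1) \adjeq_{\lambda} f(k)=b$, so $f(k+1)\in N_{\lambda}(b)$, and the isomorphism $p|_{N_{\kappa}(e)}$ produces exactly one point $e'\in N_{\kappa}(e)$ with $p(e')=f(k+1)$. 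I set $\tilde{f}(k+1)=e'$; continuity of $\tilde{f}$ is automatic from $e'\adjeq_{\kappa} e$, and $p\circ\tilde{f}=f$ holds by construction.

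Uniqueness follows by the same argument run in the other direction. If $\tilde{g}:[0,m]_{\Z}\to E$ is another lift with $\tilde{g}(0)=e_0$, then $\tilde{g}(0)=\tilde{f}(0)$, and assuming $\tilde{g}(k)=\tilde{f}(k)$, the continuity of $\tilde{g}$ forces $\tilde{g}(k+1)\in N_{\kappa}(\tilde{g}(k))=N_{\kappa}(e)$; but $p(\tilde{g}(k+1))=f(k+1)$ and $p|_{N_{\kappa}(e)}$ is injective, so $\tilde{g}(k+1)=e'=\tilde{f}(k+1)$.

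I do not expect any serious obstacle here; the argument is essentially the classical covering-space lifting proof, adapted to the discrete setting. The only point that deserves care is the identification $e=e_{i_0}$, which must be justified explicitly from the fact that $p$ is a bijection on each neighborhood $N_{\kappa}(e_i)$ in the decomposition rather than only a surjection; without this observation the inductive choice of which sheet to stay on would not be canonical.
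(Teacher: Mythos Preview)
Your argument is correct; this is the standard inductive lifting proof, and the only delicate step---identifying $e$ with the center $e_{i_0}$ of its sheet via injectivity of $p|_{N_{\kappa}(e_{i_0})}$---is handled properly. Note that the paper does not supply its own proof of this theorem: it is quoted as a result of~\cite{Han05}, so there is no in-paper argument to compare against.
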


Next, we show that several of the variants of
covering maps that we have discussed are
equivalent.

\begin{thm}
\label{coverEquivs}
 Let $p: (X,\kappa) \to (Y,\lambda)$ be a continuous
    surjection. Then the following are equivalent.
    \begin{enumerate}
        \item $p$ is a digital covering map.
        \item $p$ is a local isomorphism.
        \item $p$ is a pseudo-covering in the sense of
            Definition~\ref{HanPseudocover}.
        \item $p$ is a WL-isomorphism with the unique
              path lifting property.
\end{enumerate}
\end{thm}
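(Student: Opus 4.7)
The plan is to establish the four-way equivalence by invoking the results already cited, leaving only one substantive new argument. The equivalence $(1) \Leftrightarrow (2)$ is exactly Theorem~\ref{PakZakEquiv}. For $(1) \Leftrightarrow (3)$, the implication $(3) \Rightarrow (1)$ is Theorem~\ref{pseudoCoverIsCover}, while $(1) \Rightarrow (3)$ follows from a direct comparison of Definitions~\ref{coveringDef} and~\ref{HanPseudocover}: the set-theoretic conditions are identical, and any $(\kappa,\lambda)$-isomorphism from $N_\kappa(e_i)$ onto $N_\lambda(b)$ trivially satisfies the weaker WL-isomorphism condition.

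For $(1) \Rightarrow (4)$, Theorem~\ref{liftingThm} supplies UPLP, and the implication $(1) \Rightarrow (2)$ combined with Definition~\ref{localIso} gives that $p|_{N(x)}$ is a $(\kappa,\lambda)$-isomorphism onto $N_\lambda(p(x))$; viewing this as an isomorphism onto its image $p(N(x))$ shows $p$ is a WL-isomorphism.

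The only substantive direction is $(4) \Rightarrow (1)$. I would prove it by showing $p$ is a local isomorphism and then invoking Theorem~\ref{PakZakEquiv}. Fix $x \in X$. The WL-isomorphism hypothesis already gives that $p|_{N(x)}$ is an isomorphism onto $p(N(x))$, and Theorem~\ref{continuityPreserveAdj} (in Han's formulation $p(N_\kappa(x)) \subset N_\lambda(p(x))$) gives $p(N(x)) \subset N_\lambda(p(x))$. Thus the task reduces to the reverse inclusion $N_\lambda(p(x)) \subset p(N(x))$, and this is where UPLP enters: for any $y \in N_\lambda(p(x))$ with $y \neq p(x)$, define $f: [0,1]_{\Z} \to Y$ by $f(0) = p(x)$ and $f(1) = y$; this is $(c_1, \lambda)$-continuous since $y \adjeq_\lambda p(x)$, so UPLP yields a lift $\tilde{f}:[0,1]_{\Z} \to X$ with $\tilde{f}(0) = x$, and continuity of $\tilde{f}$ forces $\tilde{f}(1) \in N(x)$ with $p(\tilde{f}(1)) = y$, so $y \in p(N(x))$. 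Hence $p(N(x)) = N_\lambda(p(x))$, $p|_{N(x)}$ is a $(\kappa,\lambda)$-isomorphism onto $N_\lambda(p(x))$, and $p$ is a local isomorphism. The main obstacle is precisely this reverse inclusion, but the argument is clean once one recognizes that UPLP applied to one-step paths is exactly what promotes a WL-isomorphism (whose image is only guaranteed to be the possibly proper set $p(N(x))$) to a full local isomorphism (whose image is all of $N_\lambda(p(x))$).
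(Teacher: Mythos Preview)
Your proposal is correct and follows essentially the same route as the paper: the cycle of implications is organized identically, and your substantive argument for $(4)\Rightarrow(2)$ via lifting one-step paths to obtain the reverse inclusion $N_\lambda(p(x))\subset p(N_\kappa(x))$ is exactly the paper's argument. Your treatment of $(1)\Rightarrow(4)$ is in fact slightly more complete than the paper's, which cites only Theorem~\ref{liftingThm} for the UPLP part and leaves the WL-isomorphism part implicit.
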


\begin{proof}
    That 1) and 2) are equivalent is stated in 
    Theorem~\ref{PakZakEquiv}.

    That 1) implies 3) follows from 
    Definitions~\ref{coveringDef}
    and~\ref{HanPseudocover}.
    
    That 3) implies 1) is stated in
    Theorem~\ref{pseudoCoverIsCover}.

    It follows from Theorem~\ref{liftingThm}
    that 1) implies 4).
    
    To show 4) implies 2):
    suppose $p$ is a WL-isomorphism with the unique path 
    lifting property. Let
    $x \in X$, $p(x)=y \in Y$, 
    $y' \in N_{\lambda}(y) \setminus \{y\}$. Then
    $\{y,y'\}$ is a path in $(Y,\lambda)$, hence
    lifts to a unique path $\{x,x'\}$ in $(X,\kappa)$ with
    $p(x)=y$, $p(x')=y'$. Thus
    $N_{\lambda}(y) \subset p(N_{\kappa}(x))$. 
    Since continuity implies 
    $ p(N_{\kappa}(x)) \subset N_{\lambda}(y)$, we have
    $ p(N_{\kappa}(x)) = N_{\lambda}(y)$. Since $p$ is a
    WL-isomorphism, we have that $p$ is a local isomorphism.
\end{proof}

\begin{remark}
\label{badExl}
Han's Example 4.3(4) of~\cite{HanUnique} considers
(please note here ``$c_1$" is the label of
a point, so we will avoid using this notation
for 4-adjacency)
$C= \{ \, c_i \, \}_{i=0}^3$,
$D= \{ \, d_i \, \}_{i=0}^3$, where
\[ (0,0) = c_0 = d_0,~~~(1,0) = c_1 = d_1,~~~
    (1,1) = c_2 = d_3,~~~(0,1) = c_3=d_2,
\]
See Figure~\ref{fig:HanFig2}.
Han's claim, that $p$ is not a 
pseudo-4-covering (a pseudo-covering when 
$A$ and $B$ both use 4-adjacency), is correct, 
but this example should not have been
considered since $p$ is 
not 4-continuous: 
\[ c_0 \adj_4 c_3~~~ \mbox{ but }~~~
   p(c_0) = d_0 \not \adj_4 d_3 = p(c_3).\]

Since $p$ is (4,8)-continuous, perhaps Han intended to show that $p$ is not a 
(4,8)-pseudocovering as defined 
at Definition~\ref{HanPseudocover}.
This can be done by observing that 
\[ \#N_4(c_0)=3 \neq 4 = \#N_8(d_0) = \#N_8(p(c_0)). 
\]
Therefore, $p$ is not a (4,8)-local isomorphism, so
by Theorem~\ref{coverEquivs} is not a 
(4,8)-pseudocovering as
defined at Definition~\ref{HanPseudocover}.
\end{remark}

In the first paragraph of page 5104
of~\cite{HanEquiv}, Han attributes the definition
of a digital pointed continuous function to his
paper~\cite{Han05}. The definition should be attributed
to the earlier paper~\cite{Boxer99}.

   \begin{figure}
    \centering
\includegraphics{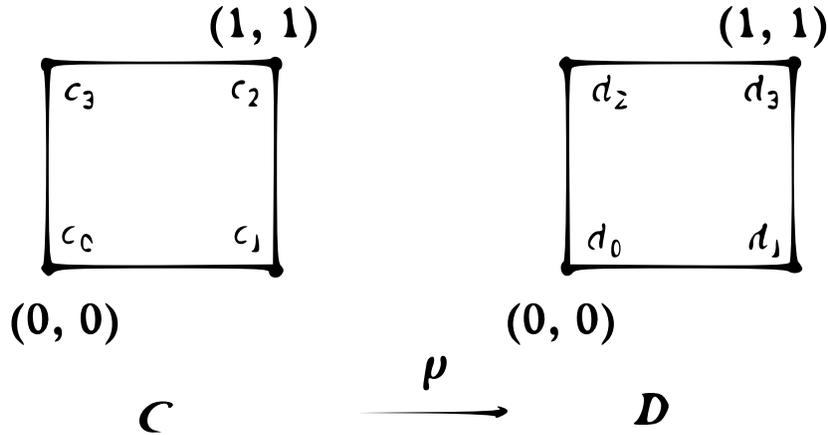}
    \caption{The function
    $p(c_i)=d_i$ of
    Han's Example 4.3(4)
    of~\cite{HanUnique};
    discussed here in
    Remark~\ref{badExl}
    }
    \label{fig:HanFig2}
\end{figure}

Pakdaman modifies Han's Definition~\ref{HanPseudocover}
as follows.

\begin{definition}
    \label{PakPseudocovering}
    {\rm \cite{Pak22}}
    Let $p: (E,\kappa) \to (B,\lambda)$ be a surjection
    of digital images. Suppose for all $b \in B$ we
    have the following.
    \begin{enumerate}
        \item for some index set $M$,
              $\bigcup_{i \in M} N_{\kappa}(e_i,1)
              \subset p^{-1}(N_{\lambda}(b,1))$ where
              $e_i \in p^{-1}(b)$;
        \item if $i,j \in M$ and $i \neq j$ then
              $N_{\kappa}(e_i,1) \cap N_{\kappa}(e_i,1) =
              \emptyset$; and
        \item for all $i \in M$,
              $p|{N_{\kappa}(e_i,1)}: N_{\kappa}(e_i,1) \to
               p(N_{\kappa}(e_i,1))$ is a
               $(\kappa, \lambda)$-isomorphism.
    \end{enumerate}
    Then $p$ is a {\em $(\kappa,\lambda)$-pseudocovering map}.
\end{definition}

Pakdaman proceeds to compare unique path lifting results
for pseudocovering maps based on
Definition~\ref{PakPseudocovering} with those
asserted by Han in~\cite{HanEquiv} based on
Definition~\ref{HanPseudocover}. He showed that
Definition~\ref{PakPseudocovering} gives something
not equivalent to a covering map, since such a
pseudocovering need not have the unique path lifting
property.

\section{Further remarks}
We have discussed various flaws in Han's
paper~\cite{HanEquiv}. We have shown 
that several variants
of digital covering maps that were presented
in~\cite{HanEquiv} are in fact equivalent.

Corrections and suggestions
of an anonymous referee are
acknowledged with gratitude.

\end{document}